\documentclass[11pt]{amsart}
\usepackage[dvipsnames]{color}
\usepackage{amsfonts,amssymb,amsmath,amscd,amstext}
\usepackage{mathtools}
\usepackage[colorlinks=true,linkcolor=teal,citecolor=purple]{hyperref}
\usepackage[utf8]{inputenc}
\usepackage{microtype}
\usepackage{graphicx}
\usepackage{changes}
\usepackage{empheq}
\usepackage[noabbrev,capitalize,nameinlink]{cleveref}
\usepackage{comment}
\usepackage{mathdots}
\usepackage{aligned-overset}
\usepackage[a4paper,top=2.2cm,bottom=2cm,left=2.1cm,right=2cm]{geometry}
\usepackage{newtxtext,newtxmath}
\let\caron\v
\renewcommand{\leq}{\leqslant}
\renewcommand{\geq}{\geqslant}

\renewcommand{\v}{\nu}

\newcommand{\rr}{{\mathbb{R}}}

\newcommand{\Om}{\Omega}
\newcommand{\eps}{\varepsilon}

\definechangesauthor[color=red]{J}
\definecolor{champagne}{rgb}{0.97, 0.91, 0.81}

\definecolor{asparagus}{rgb}{0.53, 0.66, 0.42}

\DeclareMathOperator{\divv}{div}

\newtheorem{theorem}{Theorem}[section]
\newtheorem{proposition}[theorem]{Proposition}
\newtheorem{definition}[theorem]{Definition}

\theoremstyle{definition}

\theoremstyle{remark}

\numberwithin{equation}{section}

\definechangesauthor[color=blue]{JP}
\definechangesauthor[color=purple]{S}

\author[J. Pozuelo]{Juli\'an Pozuelo} 
\address[Juli\'an Pozuelo]{Departamento de Matemáticas, Universidad de Alicante\protect\newline \indent Campus de Sant Vicent del Raspeig s/n, 03690 Sant Vicent del Raspeig, Spain}
\email{julian DOT pozuelo AT ua DOT es}

\author[S.~Verzellesi]{Simone Verzellesi}
\address[Simone Verzellesi]{Dipartimento di Matematica "Tullio Levi-Civita", Università degli Studi di Padova\protect\newline
\indent via Trieste 63, 35131 Padova (PD), Italy}
\email{simone DOT verzellesi AT unipd DOT it}
\title[Serrin problems with vertical boundary behavior]{Serrin problems with vertical boundary behavior}
 
\date{\today}

\subjclass{35N25, 35J70, 53A10}
\keywords{Overdetermined problems, Serrin problems, vertical contact angle}
\thanks{\textit{Acknowledgements}. 
The authors thank Pieralberto Sicbaldi for stimulating conversations about the contents of this paper. 
S. Verzellesi is member of INdAM-GNAMPA, and is supported by the University of Padova and by the INdAM-GNAMPA 2026 Project \emph{Variational, Geometric, and Analytic Perspectives on Regularity}, CUP E53C25002010001. J. Pozuelo is supported by the  grant PID2023-151060NB-I00 funded by MCIN/AEI/10.13039/501100011033, and by ERDF/EU}
\begin{document} 
\begin{abstract}
We study Serrin-type overdetermined problems for a class of possibly degenerate elliptic operators under a vertical boundary condition forcing gradient blow-up. We identify the precise regime in which the problem admits a solution on a ball. In this regime, we prove rigidity: every solution domain is a ball, and the solution is uniquely determined by an explicit radial profile.
\end{abstract}
\maketitle



\section{Introduction and main results}

In a celebrated paper, Serrin \cite{MR333220} showed that the \emph{overdetermined boundary value problem} 
\begin{equation}\label{eq_serrin_problem}
    -\mathcal L u=1\text{ in }\Om,\qquad   u=0\text{ on }\partial\Om,\qquad   \frac{\partial u}{\partial\nu}=c\text{ on }\partial\Om\text{ for some constant }c>0,
\end{equation}
 when
 \begin{equation}\label{eq_laplacian}
     \mathcal{L}u=\Delta u
 \end{equation}
 and $\Om\subseteq\rr^n$ is a bounded domain with smooth boundary and  interior unit normal $\nu$,
is solvable in $C^2(\overline\Om)$ precisely when $\Om$ is a ball. Besides \eqref{eq_laplacian}, the results of \cite{MR333220} actually apply to much more general elliptic operators. Among them, a relevant instance is provided by the \emph{prescribed mean curvature} operator
\begin{equation}\label{eq_intro_pmc}
  \mathcal L u =\divv\left(\frac{\nabla u}{\sqrt{1+|\nabla u|^2}}\right).
\end{equation}
Besides their intrinsic interest, both problems have a strong physical motivation: \eqref{eq_laplacian} arises in the study of the torsion of solid bars and viscous incompressible fluids, and \eqref{eq_intro_pmc} describes capillary surfaces which, due to the Neumann condition in \eqref{eq_serrin_problem}, meet the boundary of a cylindrical container at a constant (non-vanishing) \emph{wetting angle}. Serrin's original proof is based on the \emph{moving planes method}, and has inspired  many subsequent symmetry results \cite{MR544879,MR1487978,MR1463801}. Shortly after \cite{MR333220}, Weinberger \cite{MR333221} provided a different proof of the rigidity of \eqref{eq_serrin_problem}. The core idea of \cite{MR333221} consists in showing the constancy of the \emph{$P$-function}
\begin{equation*}
    P(x)=|\nabla u(x)|^2+\frac{2}{n}u(x)
\end{equation*}
by combining a maximum principle argument with appropriate integral identities (cf. also \cite{MR525971}). These approaches face serious difficulties in the case of \emph{degenerate} elliptic operators. Among the obstructions, (weak) solutions in $C^1(\overline\Om)$ may fail to be $C^2$. A prototypical instance is given by the \emph{$p$-Laplace operator}
\begin{equation}\label{eq_plaplacian}
    \mathcal L u=\divv\left(|\nabla u|^{p-2}\nabla u\right),\qquad 1<p<\infty,
\end{equation}
which is singular when $1<p<2$ and degenerate when $p>2$. The first outcomes in the degenerate setting are available only under appropriate growth conditions on $\mathcal L$ \cite{MR980297}, and may distinguish between singular \cite{MR1776351} and degenerate \cite{MR1947461} regime. A significant improvement has been achieved in \cite{MR2366129,MR2232009} for the fairly general class of operators
\begin{equation}\label{eq_operators_fragala}
    \mathcal L u =\divv\left(A(|\nabla u|)\nabla u\right),
\end{equation}
where $A\in C^2(0,\infty)$ satisfies the (possibly degenerate) ellipticity assumption
\begin{equation}\label{hp_0}\tag{$\mathrm{A}1$}
    \lim_{t\to 0}B(t)=0,\qquad \dot B(t)>0\text{ on }(0,\infty),\qquad\text{}B(t)\coloneq t A(t).
\end{equation}
Clearly, \eqref{eq_laplacian}, \eqref{eq_intro_pmc} and \eqref{eq_plaplacian} fall within this class, which nevertheless allows for operators with more general growth and degeneracy conditions, including for instance the class 
\begin{equation}\label{eq_more_general}
    \mathcal L u=\divv\left(\frac{|\nabla u|^{p-2}}{\left(1+|\nabla u|^2\right)^{q/2}}\nabla u\right),\qquad 1<p<\infty,\qquad 0\leq q\leq p-1.
\end{equation}
 In this paper, we investigate the singular case in which the graph meets the boundary cylinder with a vertical contact angle, forcing $|\nabla u|$ to blow up as $\partial\Om$ is approached. More precisely, we consider the problem
\begingroup
\renewcommand{\theequation}{P}
\begin{subequations}\label{eq_overdetermined_problem}
\renewcommand{\theequation}{\theparentequation\arabic{equation}}

\begin{empheq}[left=\empheqlbrace]{align}
-\divv\left(A(|\nabla u|)\nabla u\right)
    &= 1
    && \text{in } \Om,
\label{eq_overdetermined_problem_pde}
\\
u
    &= 0
    && \text{on } \partial\Om,
\label{eq_overdetermined_problem_dirichlet}
\\
\lim_{x\to x_0}
\frac{\left\langle \nabla u(x),\v(x_0)\right\rangle}
{\sqrt{1+|\nabla u(x)|^2}}
    &= 1
    && \text{for every } x_0\in\partial\Om.
\label{eq_overdetermined_problem_boundary_verticality}
\end{empheq}
\end{subequations}
\endgroup
\addtocounter{equation}{-1}
In the following,  $\Om\subseteq\rr^n$ is a bounded domain with $C^1$ boundary, $\v$ is the interior unit normal to $\partial\Om$ and $A\in C^2(0,\infty)$ satisfies \eqref{hp_0}. Heuristically, the Neumann condition \eqref{eq_overdetermined_problem_boundary_verticality} arises by singularizing the equation
 \begin{equation*}    
\frac{\left\langle \nabla u,\v\right\rangle}
{\sqrt{1+|\nabla u|^2}}=\frac{c}{\sqrt{1+c^2}}\qquad \text{on $\partial\Om$},
 \end{equation*}
 which is satisfied by solutions to \eqref{eq_serrin_problem}. Indeed, \eqref{eq_overdetermined_problem_boundary_verticality}
forces the uniform vertical displacement  \begin{equation}\label{lem_verticality_uniform}
        \lim_{x\to x_0}\frac{\nabla u(x)}
{\sqrt{1+|\nabla u(x)|^2}}=\v(x_0),\qquad    \lim_{x\to x_0}|\nabla u(x)|=+\infty\qquad\text{ uniformly for $x_0\in\partial\Om$.}
    \end{equation}
   In the prototypical case of the prescribed mean curvature operator \eqref{eq_intro_pmc}, \eqref{eq_overdetermined_problem_boundary_verticality} describes the limiting case of \emph{perfectly wetting} capillary surfaces (cf. \cite{MR487722}).
    Because of  \eqref{lem_verticality_uniform}, the lack of $C^1$-boundary regularity of $u$ is forced by the very structure of the problem. Accordingly, solutions to \eqref{eq_overdetermined_problem} are meant as follows.
\begin{definition}\label{defweaksol}
     $u\in C^1(\Om)\cap C^0(\overline\Om)$ is a weak solution to \eqref{eq_overdetermined_problem} if it satisfies \eqref{eq_overdetermined_problem_dirichlet} and \eqref{eq_overdetermined_problem_boundary_verticality}, and moreover.
    \begin{equation}\label{eq_weak_solution_integral_form}
        \int_\Om A(|\nabla u|)\langle\nabla u,\nabla\varphi\rangle\,dx=\int_\Om\varphi\,dx,\qquad\text{  for every $\varphi\in C^\infty_c(\Om)$.}
    \end{equation}
\end{definition}
When \eqref{hp_0} holds, \cite[Proposition]{MR2232009} and \cite[Theorem 1.1]{MR2366129} ensure that \eqref{eq_serrin_problem} is solvable precisely when $\Om$ is a ball, but regardless of further constraints on $A$. On the other hand, the existence of a weak solution to \eqref{eq_overdetermined_problem} forces \emph{a priori} constraints not only on the domain, but also on the differential operator. A first heuristic indication of this phenomenon is that the verticality condition \eqref{eq_overdetermined_problem_boundary_verticality} rules out any operator for which standard elliptic regularity yields $C^1$-boundary regularity for the Dirichlet problem (e.g. the Laplacian), since this would be incompatible with the required blow-up of $|\nabla u|$. More precisely, the following holds.
\begin{theorem}\label{thm_lambda}
    Assume that $u\in C^1(\Om)\cap C^0(\overline\Om)$ is a weak solution to \eqref{eq_overdetermined_problem}. Assume that \eqref{hp_0} holds. Then 
\begin{equation}\label{consequence_1}
        \lambda\coloneq\lim_{t\to\infty} B(t)=\frac{|\Om|}{\mathcal H^{n-1}(\partial\Om)}.
    \end{equation}
    Moreover, $\Om$ is a self-Cheeger set. Namely
    \begin{equation*}
        \frac{\mathcal {H}^{n-1}(\partial\Om)}{|\Om|}=\min\left\{\frac{P(F)}{|F|}\,:\,\text{ $F\subseteq \Om$ has finite perimeter},\,|F|>0\right\}.
    \end{equation*}
\end{theorem}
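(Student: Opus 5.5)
Since $\dot B>0$ and $B(t)\to 0$ as $t\to0$, the limit $\lambda=\lim_{t\to\infty}B(t)\in(0,+\infty]$ exists. Set $V(x)=A(|\nabla u|)\nabla u$. Then $|V|=B(|\nabla u|)<\lambda$ in $\Om$, and $-\divv V=1$ weakly. By \eqref{lem_verticality_uniform}, $V(x)\to\lambda\,\v(x_0)$ uniformly as $x\to x_0\in\partial\Om$, in the sense that $|\nabla u|\to\infty$ and $\nabla u/|\nabla u|\to\v(x_0)$. The plan is to integrate the equation against cut-offs approximating $\chi_\Om$ to identify $\lambda$, and then to use $V$ as a calibration to obtain the Cheeger property.

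\textbf{Step 1: $\lambda<\infty$ and the identity \eqref{consequence_1}.} Since $\partial\Om$ is $C^1$, we can take $\varphi_\eps\in C^\infty_c(\Om)$ with $0\leq\varphi_\eps\leq1$, $\varphi_\eps=1$ outside an $\eps$-neighbourhood of $\partial\Om$, and $\nabla\varphi_\eps$ supported in the collar $\{\eps/2<d(x)<\eps\}$ (where $d$ is the distance to $\partial\Om$). In that collar $\nabla\varphi_\eps\approx\eps^{-1}\chi\,\v$, with $\int|\nabla\varphi_\eps|\to\mathcal H^{n-1}(\partial\Om)$ (a mollified distance function works). Testing \eqref{eq_weak_solution_integral_form} gives
\[
\int_\Om \langle V,\nabla\varphi_\eps\rangle\,dx=\int_\Om\varphi_\eps\,dx\to|\Om|.
\]
If $\lambda=\infty$, the left-hand side tends to $+\infty$: near the boundary $\langle V,\nabla\varphi_\eps\rangle\geq(1-o(1))B(|\nabla u|)|\nabla\varphi_\eps|$, and $B(|\nabla u|)\to\infty$ uniformly. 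This contradicts the finiteness of $|\Om|$. If $\lambda<\infty$, uniform convergence gives $\langle V,\nabla\varphi_\eps\rangle=(\lambda+o(1))|\nabla\varphi_\eps|$ on the collar, so the left-hand side tends to $\lambda\,\mathcal H^{n-1}(\partial\Om)$. This proves \eqref{consequence_1}.

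The main technical point is constructing $\varphi_\eps$ when $\partial\Om$ is only $C^1$. There $d$ need not be $C^1$. Instead I would use $\nabla\varphi_\eps$ whose direction is uniformly close to $\v$ of the nearest boundary point, which is all that is needed, since the integrand is controlled through $\langle V,\nabla\varphi_\eps\rangle/|\nabla\varphi_\eps|\to\lambda$. An alternative is to mollify a $C^1$ defining function $\rho$ of $\Om$ and use the coarea formula on its level sets $\{\rho=t\}$, which are $C^1$ hypersurfaces converging to $\partial\Om$ with total area converging to $\mathcal H^{n-1}(\partial\Om)$.

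\textbf{Step 2: $\Om$ is self-Cheeger.} Let $F\subseteq\Om$ have finite perimeter with $|F|>0$. The field $V$ is bounded by $\lambda$ with $\divv V=-1$ in $\mathcal D'(\Om)$. By the Anzellotti/Gauss--Green theory for bounded divergence-measure fields, pairing with $\chi_F$ gives
\[
|F|=-\int_F\divv V=\int_{\partial^*F}\langle V,\nu_F\rangle\,d\mathcal H^{n-1}\leq\lambda P(F),
\]
where $\nu_F$ is the interior normal and the normal trace has $L^\infty$ norm at most $\lambda$. Parts of $\partial^*F$ on $\partial\Om$ are handled because $V$ extends continuously to $\overline\Om$ with value $\lambda\v$.

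A simpler route avoids traces. Approximate $\chi_F$ by smooth functions $f_k$ with $f_k\to\chi_F$ in $L^1$ and $\int|\nabla f_k|\to P(F)$ (Anzellotti–Giusti). Multiply each $f_k$ by the cut-off $\varphi_\eps$ from Step 1 and test \eqref{eq_weak_solution_integral_form}. This gives $\int f_k\varphi_\eps\leq\lambda\int\varphi_\eps|\nabla f_k|+\int f_k\langle V,\nabla\varphi_\eps\rangle$. The last term is bounded by $\int_{\text{collar}}|V||\nabla\varphi_\eps|\leq\lambda\,\mathcal H^{n-1}(\partial\Om)+o(1)$ only in the worst case. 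This is too crude, so the trace argument is the one to use.

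Hence $|F|/P(F)\leq\lambda=|\Om|/\mathcal H^{n-1}(\partial\Om)$. Equality holds for $F=\Om$, since $P(\Om)=\mathcal H^{n-1}(\partial\Om)$ for a $C^1$ boundary. This gives the claimed minimum.
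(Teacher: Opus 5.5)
Your proposal is correct and follows essentially the paper's route. Step 1, in its defining-function variant, is exactly the paper's argument: you test with $\eta(\varrho/\eps)$, apply the coarea formula on the level sets $\{\varrho=t\}$ whose areas converge to $\mathcal H^{n-1}(\partial\Om)$, and use the uniform convergence of $B(|\nabla u|)\langle \nabla u/|\nabla u|,\nabla\varrho/|\nabla\varrho|\rangle$ to $\lambda$ (or to $+\infty$) to get both $\lambda<\infty$ and $\lambda\,\mathcal H^{n-1}(\partial\Om)=|\Om|$. Step 2 is the same calibration argument as the paper's: the Gauss--Green bound $|F|\leq\lambda P(F)$ for the bounded divergence-measure field $A(|\nabla u|)\nabla u$, including sets $F$ that touch $\partial\Om$, is what the paper obtains from its cited weak divergence theorem, so you can drop the abandoned ``simpler route'' paragraph.
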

For instance, when $\mathcal L$ is as in \eqref{eq_more_general}, \eqref{consequence_1} implies that \eqref{eq_overdetermined_problem} is solvable only if $q=p-1$, thus ruling out \eqref{eq_laplacian} and \eqref{eq_plaplacian}. In order to understand the correct regime for the solvability of \eqref{eq_overdetermined_problem}, one may look at the boundary behavior of the relevant $P$-function associated with \eqref{eq_operators_fragala}. According to \cite{MR2366129,MR2232009}, we set  
\begin{equation}\label{def_pfunction}
  P(x)\coloneq \Phi\big(|\nabla u(x)|\big)+\frac{2}{n}u(x),\qquad x\in\Om,
\end{equation} 
where
\begin{equation*}\label{eq_def_phi_e_equiv}
      \Phi(t)\coloneq2\int_0^t s\,\dot B(s)\,ds,\qquad t\in(0,\infty).
\end{equation*}
In the finite-slope setting of \cite{MR2366129,MR2232009}, the boundary continuity of $P$ is crucial to carry out the appropriate maximum-principle argument. Accordingly, we observe that, due to \eqref{eq_overdetermined_problem_dirichlet}, \eqref{eq_overdetermined_problem_boundary_verticality} and \eqref{lem_verticality_uniform}, $P$ extends continuously up to $\partial\Om$ to the constant value $$    \Phi_\infty\coloneq \lim_{t\to\infty}\Phi(t)\in(0,\infty)$$
provided that 
\begin{equation}\label{hp_1}\tag{$\mathrm{A}2$}
        \int_{0}^{\infty}t\,\dot B(t)\,dt<\infty.
    \end{equation}
As we show in the following result, this heuristic consideration indeed identifies the correct class of operators for which \eqref{eq_overdetermined_problem} is solvable in the model case of a spherical domain.
\begin{theorem}\label{thm_radial}
   Problem \eqref{eq_overdetermined_problem} has weak solution $u$ on some ball $B_R$ if and only if \eqref{hp_1} holds.    
    In such cases, \eqref{consequence_1} holds,  $R=n\lambda$, and $u$ is uniquely determined by
    \begin{equation}\label{eq_radial}
        u(x)=\int_{|x|}^{n\lambda} B^{-1}\left(\frac{t}{n}\right)\,dt.
    \end{equation}
\end{theorem}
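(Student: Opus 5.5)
Assume first that $u$ is a weak solution of \eqref{eq_overdetermined_problem} on $B_R$. By \cref{thm_lambda}, $\lambda=\lim_{t\to\infty}B(t)=|B_R|/\mathcal H^{n-1}(\partial B_R)=R/n$. This gives \eqref{consequence_1} and $R=n\lambda$, and in particular $\lambda<\infty$.

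Next I would show that $u$ is radial. The plan is to prove that $w:=A(|\nabla u|)\nabla u$ is forced to equal $-x/n$. Since $B$ is strictly increasing with $B(0^+)=0$, the map $\xi\mapsto A(|\xi|)\xi$ is a strictly monotone homeomorphism of $\rr^n$ onto the open ball $B_\lambda$.

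\emph{Step 1 (interior flux).} Test \eqref{eq_weak_solution_integral_form} with cutoffs approximating $\chi_{B_r}$ and pass to the limit in $r$. This gives $\int_{\partial B_r}\langle w,-x/r\rangle=|B_r|$ for a.e. $r<R$. More generally, for any subdomain $F$ one gets $\int_{\partial F}\langle w,-\nu_F\rangle=|F|$, while $|w|<\lambda$ pointwise.

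\emph{Step 2 (comparison with the candidate).} The candidate $v(x)=\int_{|x|}^{R}B^{-1}(t/n)\,dt$ satisfies $A(|\nabla v|)\nabla v=-x/n$. It therefore solves \eqref{eq_overdetermined_problem_pde} classically away from the origin and weakly across it. It vanishes on $\partial B_R$, and its slope $B^{-1}(|x|/n)$ tends to $\infty$ as $|x|\to R$, since $B^{-1}(s)\to\infty$ as $s\to\lambda$. Hence \eqref{eq_overdetermined_problem_boundary_verticality} holds. Uniqueness then follows from a comparison principle. Test the difference of the weak equations with $(u-v-\eps)^+$, which has compact support in $B_R$ because both functions vanish continuously on the boundary. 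Strict monotonicity of the vector field gives $\nabla(u-v-\eps)^+=0$, so $u\le v+\eps$; by symmetry and $\eps\to0$, $u=v$. The test function is only Lipschitz with compact support, but it is admissible by density since $u,v\in C^1(B_R)$. This forces $u$ to equal \eqref{eq_radial}.

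\emph{Step 3 (necessity of \eqref{hp_1}).} Since $v$ is continuous on $\overline{B_R}$, the integral $\int_0^R B^{-1}(t/n)\,dt=n\int_0^\lambda B^{-1}(s)\,ds$ is finite. Substituting $s=B(\tau)$ gives $\int_0^\infty \tau\,\dot B(\tau)\,d\tau<\infty$, which is \eqref{hp_1}. I expect this step to be delicate on one point: \eqref{hp_1} must be shown to imply $\lambda<\infty$ as well. This holds because $B(t)-B(1)=\int_1^t\dot B\le\int_1^t s\dot B\,ds$.

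\emph{Step 4 (sufficiency).} Conversely, assume \eqref{hp_1}. Then $\lambda<\infty$, and $\int_0^\lambda B^{-1}=\int_0^\infty\tau\dot B\,d\tau<\infty$. Setting $R=n\lambda$, the function $v$ is in $C^1(B_R\setminus\{0\})\cap C^0(\overline{B_R})$. At the origin, $\nabla v(x)=-B^{-1}(|x|/n)\,x/|x|\to0$ because $B^{-1}(0^+)=0$, so $v\in C^1(B_R)$. The weak formulation holds because $\divv(-x/n)=-1$ and $w$ is continuous. Verticality follows from $\nabla v/\sqrt{1+|\nabla v|^2}\to -x/|x|=\nu$, the interior normal.

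The main obstacle is making the comparison argument rigorous with non-$C^1$ boundary data. I would handle it with the $\eps$-shift above, which keeps all test functions compactly supported.
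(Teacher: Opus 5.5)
\textbf{Gap in the ``only if'' direction.} Your comparison function $v$ does not exist unless \eqref{hp_1} already holds. Substituting $t=nB(\tau)$, for $0<|x|<R=n\lambda$ you get
\[
v(x)=\int_{|x|}^{n\lambda}B^{-1}\Big(\frac tn\Big)\,dt=n\int_{B^{-1}(|x|/n)}^{\infty}\tau\,\dot B(\tau)\,d\tau .
\]
The singularity sits at the outer endpoint $t=n\lambda$, where $B^{-1}(t/n)\to\infty$; the part near the origin is harmless. So if \eqref{hp_1} fails, $v\equiv+\infty$ on $B_R\setminus\{0\}$. Step 2 (``$v$ vanishes on $\partial B_R$'', testing with $(u-v-\eps)^+$) and Step 3 (``since $v$ is continuous on $\overline{B_R}$, the integral is finite'') therefore presuppose exactly what Step 3 is meant to prove. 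The real obstacle is this circularity, not the boundary regularity you flagged; your $\eps$-shift handles the latter correctly.

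\textbf{How to repair it.} The paper never uses a profile normalized at the boundary before \eqref{hp_1} is known. It first gets radial symmetry by comparing $u$ with $u\circ T$ for rotations $T$; these are two genuine solutions, both vanishing on $\partial B_R$. It then reduces to $B(-\dot g(r))=r/n$ and integrates from the center, $g(0)-g(r)=\int_0^rB^{-1}(\tau/n)\,d\tau$. Since $g(0)=u(0)$ is finite and $g(r)\to0$ as $r\to R$, this gives \eqref{hp_1}.

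You can keep your comparison route by normalizing at the origin instead. The function $h(x)=-\int_0^{|x|}B^{-1}(t/n)\,dt$ is finite and $C^1$ on $B_{n\lambda}$, and $A(|\nabla h|)\nabla h=-x/n$, with no integrability at infinity needed. For $r<R$ set $c_r:=\min_{\partial B_r}u+\int_0^rB^{-1}(t/n)\,dt$. Then $h+c_r\le u$ on $\partial B_r$, and your $\eps$-shift comparison applies verbatim on $B_r$. Evaluating at $0$ gives $c_r\le u(0)$. Letting $r\to R$ and using $\min_{\partial B_r}u\to0$ yields $\int_0^{R}B^{-1}(t/n)\,dt\le u(0)<\infty$, which is \eqref{hp_1}.

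Only after this is $v$ well defined. From there your Steps 2 and 4 are correct. Using \Cref{thm_lambda} to get $R=n\lambda$ at the outset is also legitimate. Once repaired, the direct comparison $u=v$ is a self-contained substitute for the paper's symmetrization-plus-ODE argument.
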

Once \Cref{thm_radial} is established, the main result of this paper provides the rigidity of \eqref{eq_overdetermined_problem}, which is indeed solvable \emph{precisely} when $\Om$ is a ball.
\begin{theorem}\label{thm_main}
    Assume that there is a weak solution $u\in C^1(\Om)\cap C^0(\overline\Om)$ to \eqref{eq_overdetermined_problem}. Assume that \eqref{hp_0} and \eqref{hp_1} hold. Then, up to translations, $\Om$ is the ball $B_{n\lambda}$, and $u$ is the radially symmetric function defined by \eqref{eq_radial}.
\end{theorem}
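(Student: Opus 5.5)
I will follow Weinberger's $P$-function strategy, adapted as in \cite{MR2366129,MR2232009}, exploiting the fact that under \eqref{hp_1} the function $P$ in \eqref{def_pfunction} extends continuously to $\partial\Om$ with constant value $\Phi_\infty$. The plan has four steps.

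\textbf{Step 1 (regularity).} Since $u\in C^1(\Om)$ solves a quasilinear equation with $\dot B>0$, standard regularity gives $u\in C^{2,\alpha}$ (indeed $C^3$) near every point where $\nabla u\neq0$. The critical set $\{\nabla u=0\}$ is compact in $\Om$: by \eqref{lem_verticality_uniform}, $|\nabla u|\to\infty$ uniformly at $\partial\Om$. On that set I will use the regularized/approximation arguments of \cite{MR2366129,MR2232009}. These give $A(|\nabla u|)\nabla u\in W^{1,2}_{loc}$ and allow the maximum principle for $P$ to be applied in the weak sense.

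\textbf{Step 2 (subsolution property).} I will show that $P$ satisfies, in $\{\nabla u\neq0\}$, a linear elliptic inequality $L P\ge 0$ with no zeroth-order term, whose principal part is the linearized operator $a_{ij}(\nabla u)$. The Cauchy--Schwarz step is $|D(A\nabla u)|^2\ge \frac1n(\divv(A\nabla u))^2=\frac1n$, and it becomes an equality only when $D(A(|\nabla u|)\nabla u)=-\frac1n I$. Following \cite{MR2366129} I will then deduce that $\max_{\overline\Om}P$ is attained on $\partial\Om$, so $P\le \Phi_\infty$ in $\Om$. This uses the continuity of $P$ up to $\partial\Om$ and the fact that the critical set does not carry the maximum, which is handled as in the cited papers.

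\textbf{Step 3 (reverse integral inequality).} I will compute $\int_\Om P$, or more precisely $\int_\Om (\Phi_\infty-P)$ against a suitable weight, via a Pohozaev/Rellich identity. Test the equation with $\langle x,\nabla u\rangle$ and with $u$, using cut-offs $\Om_\eps=\{u>\eps\}$ and letting $\eps\to0$. The boundary terms become $\lim \int_{\partial\Om_\eps}\big(\Phi(|\nabla u|)$ type terms$\big)\langle x,\nu\rangle$, which by \eqref{lem_verticality_uniform} and \eqref{hp_1} converge to $\Phi_\infty$-multiples of $\int_{\partial\Om}\langle x,\nu\rangle=n|\Om|$. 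By \Cref{thm_lambda}, the relevant flux $B(|\nabla u|)\to\lambda$ with $\lambda=|\Om|/\mathcal H^{n-1}(\partial\Om)$. Combining these should yield $\int_\Om |\nabla u|B(|\nabla u|)\,(\Phi_\infty-P)\,dx\le 0$, or an equivalent identity forcing $P\equiv\Phi_\infty$ given $P\le\Phi_\infty$. I expect this to be the main obstacle. The boundary integrals involve $B(|\nabla u|)|\nabla u|$ and $\Phi(|\nabla u|)$ with $|\nabla u|\to\infty$, so I must check carefully that the divergent pieces cancel. I would first try writing everything in terms of $\Psi(t)=tB(t)-\frac12\Phi(t)$, whose limit is finite iff \eqref{hp_1} holds. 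I would also use the uniform convergence of $\nabla u/|\nabla u|$ to $\nu$ together with $C^1$ regularity of $\partial\Om$ to pass to the limit on $\partial\Om_\eps$.

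\textbf{Step 4 (conclusion).} Once $P\equiv\Phi_\infty$, equality in Cauchy--Schwarz gives $D(A(|\nabla u|)\nabla u)=-\frac1n I$ on $\{\nabla u\ne0\}$, which by density and continuity extends to all of $\Om$. Hence $A(|\nabla u|)\nabla u=-\frac1n(x-x_0)$, so $u$ is radial about $x_0$ and its level sets are spheres. Since $\partial\Om$ is the zero level set, $\Om$ is a ball. \Cref{thm_radial} then gives $R=n\lambda$ and that $u$ is given by \eqref{eq_radial}.
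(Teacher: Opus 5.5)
Your architecture is the paper's: continuous extension of $P$ by $\Phi_\infty$ under \eqref{hp_1}, a maximum principle on interior domains giving $P\le\Phi_\infty$, a Pohozaev identity on interior level-set domains, then rigidity. The gap is Step 3, the only place where the vertical boundary condition does real work. You leave it as an expectation, and the device you propose for it is wrong. Your $\Psi(t)=tB(t)-\frac12\Phi(t)$ equals $\int_0^t B(s)\,ds\ge B(1)(t-1)$, which diverges for every admissible $A$, whether or not \eqref{hp_1} holds. The combination with a finite limit exactly under \eqref{hp_1} is $tB(t)-\int_0^tB(s)\,ds=\frac12\Phi(t)$.

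There is also no cancellation of divergent integrals to check. On a level set $\partial\Om_\eps$ the gradient is normal, so the boundary integrand of the Pucci--Serrin identity with $h(x)=x$ collapses \emph{pointwise} to $\bigl(\mathcal L(\nabla u)-|\nabla u|B(|\nabla u|)\bigr)\langle x,\nu^e\rangle=-\frac12\Phi(|\nabla u|)\langle x,\nu^e\rangle$, where $\mathcal L(p)=\int_0^{|p|}B$. Testing with $u-\eps$, which vanishes on $\partial\Om_\eps$, gives $\int_{\Om_\eps}B(|\nabla u|)|\nabla u|\,dx=\int_{\Om_\eps}(u-\eps)\,dx$ with no boundary term. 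Combining the two yields the \emph{unweighted} identity
\[
\int_{\Om_\eps}P\,dx-\frac{2\eps}{n}|\Om_\eps|=\frac1n\int_{\partial\Om_\eps}\Phi(|\nabla u|)\,\langle x,\nu^e\rangle\,d\mathcal H^{n-1}.
\]
So the target is $\int_\Om P\,dx=\Phi_\infty|\Om|$. The weighted inequality $\int_\Om|\nabla u|B(|\nabla u|)(\Phi_\infty-P)\,dx\le0$ you aim for is not what these identities produce, and you give no derivation of it.

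To let $\eps\to0$, split the right-hand side as $\Phi_\infty|\Om_\eps|$ (divergence theorem) plus an error bounded by $\max_{\overline\Om}|x|\,\mathcal H^{n-1}(\partial\Om_\eps)\max_{\partial\Om_\eps}|\Phi(|\nabla u|)-\Phi_\infty|$. This requires $\sup_\eps\mathcal H^{n-1}(\partial\Om_\eps)<\infty$, which is a genuine step because $u$ is not $C^1$ up to $\partial\Om$. The paper proves it in \Cref{prop_convhaus}: it writes $\{u=\eps\}$ locally as a graph $x_1=\psi(x',\eps)$ and uses \eqref{lem_verticality_uniform} to show that $\psi$ is $C^1$ up to $\eps=0$, with $\psi(\cdot,0)$ describing $\partial\Om$. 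Your remark about the uniform convergence of $\nabla u/|\nabla u|$ to $\nu$ points in this direction but must become an explicit lemma.

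Likewise, $\langle x,\nabla u\rangle$ is not an admissible test function for a merely $C^1$ solution. The paper instead applies the Pucci--Serrin identity of \cite{MR2018671}, valid for $C^1$ solutions with strictly convex Lagrangian, on $E_\delta=\{u>\delta\}\cup\{d>\bar\delta\}$, where $u$ is $C^1$ up to the boundary. This choice also removes the need to know $u>0$ so that $\partial\{u>\eps\}$ stays in the collar where $\nabla u\neq0$.

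Your Step 4 is a reasonable alternative to the paper's appeal to the level-set argument of \cite{MR2366129}. There, equality in Cauchy--Schwarz gives $A(|\nabla u|)\nabla u=-\frac1n(x-x_0)$, and continuity carries this across the critical set, which has empty interior. This works provided you extract the equality case from the exact formula for the linearized operator applied to $P$.
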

We conclude with a brief outline of the proofs. The essential difference from the finite-slope regime lies in the loss of boundary regularity. In the setting of \cite{MR2366129,MR2232009}, the solution is $C^1$ up to $\partial\Omega$, so that the Neumann datum is attained pointwise. Accordingly, a combination of the maximum principle and of an integral identity à la Poho\caron{z}aev-Pucci-Serrin \cite{MR2018671,MR192184,MR855181} can be performed directly on $\Om$, yielding the constancy of the $P$-function. By contrast, \eqref{eq_overdetermined_problem_boundary_verticality} forces $|\nabla u|$ to diverge as $\partial\Omega$ is approached, so that neither $\nabla u$ nor $A(|\nabla u|)\nabla u$ can be handled directly at the boundary. 
To overcome this obstruction, we replace $\partial\Omega$ with a family of regular interior level sets. More precisely, for sufficiently small $\delta>0$, we construct domains $E_\delta\Subset\Omega$ whose boundaries $\Sigma_\delta$ lie in the level set $\{u=\delta\}$. The vertical boundary behavior implies that $\nabla u\neq0$ in a collar neighborhood of $\partial\Omega$, and hence that the hypersurfaces $\Sigma_\delta$ are of class $C^1$. Crucially, although $u$ is not of class $C^1$ up to $\partial\Omega$, \eqref{eq_overdetermined_problem_boundary_verticality} still provides enough control to ensure that 
\[
|E_\delta|\to|\Om|,\qquad \mathcal H^{n-1}(\Sigma_\delta)
\longrightarrow
\mathcal H^{n-1}(\partial\Omega),\qquad \text{as }\delta\to 0.
\]
This convergence plays the role of the boundary regularity available in the finite-slope regime.
The proof of \Cref{thm_lambda} follows by combining the coarea formula with a careful choice of test functions in \eqref{eq_weak_solution_integral_form}, in such a way to exhaust $\Om$. When $\Om$ is a ball, a suitable weak comparison principle yields radial symmetry of the solution, and reduces \eqref{eq_overdetermined_problem_pde} to an ODE. This gives both the explicit radial profile and the sharp solvability condition \eqref{hp_1}, proving \Cref{thm_radial}.
Finally, the proof of \Cref{thm_main} follows the $P$-function strategy, but once again only after localization to the interior sets $E_\delta$. By \eqref{hp_1}, $P$ admits finite constant trace $\Phi_\infty$ at $\partial\Om$. Applying a maximum principle argument on $E_\delta$ and then letting $\delta\to0$, we obtain the upper bound $P\leq\Phi_\infty$. To prove the equality, we combine \eqref{eq_weak_solution_integral_form} with the Poho\caron{z}aev-Pucci-Serrin identity to appropriate vertical shifts of $u$ over $E_\delta$. The aforementioned convergence properties of $E_\delta$ and $\Sigma_\delta$ provide enough control on the remainder terms to pass to the limit and conclude that $P\equiv\Phi_\infty$.
Once the constancy of $P$ is established, the very same rigidity argument as in \cite{MR2366129} forces the level sets of $u$ to be concentric spheres. Consequently, $\Omega$ is a ball, and $u$ coincides with the radial solution described by \eqref{eq_radial}.

\section{Proofs}
\subsection*{Preliminary results}
In the following, $u\in C^1(\Om)\cap C^0(\overline\Om)$ is a fixed weak solution to \eqref{eq_overdetermined_problem}. 
%
%
We denote by $d$ the (inner) signed distance from $\partial\Om$. 
 Since $\partial\Om$ is of class $C^1$, $d$ is Lipschitz continuous. Since $\partial\Om$ is compact, there exists $\bar\eps>0$ such that $x_0+s\v(x_0)\in\Om$ for any $s\in(0,\bar\eps)$.
 We define $\Psi:\partial\Om\times(0,\bar\eps)\to\Om$ by 
\begin{equation*}\label{eq_def_of_change_variables}
    \Psi(x_0,s)=x_0+s\v(x_0).
\end{equation*}
    Although $\Psi$ may not be injective, still $\Psi(\partial\Om\times(0,\bar\eps))=\{0<d<\bar\eps\}$. By \eqref{lem_verticality_uniform}, we choose $\bar\eps$ small so that 
    \begin{equation}\label{eq_grad_along_s}
        \frac{\partial}{\partial s}\Big(u(\Psi(x_0,s))\Big)=\left\langle \nabla u(x_0+s\nu(x_0),\nu(x_0)\right\rangle>1\qquad \text{ on }\partial\Om\times(0,\bar\eps).
    \end{equation}
Therefore, as $u\equiv 0$ on $\partial\Om$, and if $x=\Psi(x_0,s)$ for some $(x_0,s)\in\partial\Om\times(0,\bar\eps)$,
\eqref{eq_grad_along_s} yields
\begin{equation}\label{eq_aux_superlineard}
    u(x)=u(x_0+s\nu(x_0))-u(x_0)\geq s\geq d(x_0+s\nu(x_0))=d(x)\qquad \text{on }\{0<d<\bar\eps\}.
\end{equation}
Fix $\bar\delta<\bar\eps$. For any $0<\delta<\bar\delta$, set 
 \begin{equation}\label{eq_edelta}
     E_\delta=\{u>\delta\}\cup\{d>\bar\delta\}.
 \end{equation}
First, $E_\delta\Subset\Om$ is open and bounded. Moreover, \eqref{eq_aux_superlineard} grants that $
    \{d\geq\bar\delta\}\subseteq E_\delta, $
whence $\partial E_\delta\subseteq \{0<d<\bar\delta\}$. Therefore, since \eqref{eq_grad_along_s} implies that $\nabla u\neq 0$ on $\{0<d<\bar\delta\}$, then
\begin{equation*}
   \Sigma_\delta\coloneq \partial E_\delta=\{u=\delta\}\cap \{0<d<\bar\delta\},
\end{equation*}
and $\Sigma_\delta$ is a hypersurface of class $C^1$.
 In addition, \eqref{eq_aux_superlineard} implies that $\displaystyle{\bigcup_{0<\delta<\bar\delta}
 }E_\delta=\Om$, whence
 \begin{equation}\label{E_delta_conv_lebesgue}
     \lim_{\delta\to 0^+}|E_\delta|=|\Om|.
 \end{equation}
A crucial consequence for the subsequent analysis is the convergence of the measures of $\Sigma_\delta$.
 \begin{proposition}\label{prop_convhaus}
  Let $S=\{(x,u(x))\,:\,x\in\Om\}$. Then $\overline S$ is a hypersurface of class $C^1$. Moreover, 
    \begin{equation}\label{eq_convdellehausdorff}
        \lim_{\delta\to 0^+}\mathcal H^{n-1}(\Sigma_\delta)=\mathcal H^{n-1}(\partial\Om).
    \end{equation}
\end{proposition}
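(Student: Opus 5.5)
Near the boundary, the plan is to describe $\overline S$ as a graph over the vertical cylinder $\partial\Om\times\rr$ instead of over $\Om$. By \eqref{eq_grad_along_s}, for each $x_0\in\partial\Om$ the map $s\mapsto u(\Psi(x_0,s))$ is strictly increasing on $(0,\bar\eps)$ and tends to $0$ as $s\to0^+$. Set $h(x_0,t)$ to be its inverse, so $u(\Psi(x_0,h(x_0,t)))=t$ for $t\in(0,\bar\delta)$, and define $h(x_0,0)=0$. Then near $\partial\Om\times\{0\}$ the set $\overline S$ coincides with
\begin{equation*}
\{(x_0+h(x_0,t)\v(x_0),\,t)\,:\,x_0\in\partial\Om,\ 0\le t<\bar\delta\},
\end{equation*}
suitably extended to $t<0$, for instance by odd reflection, if a two-sided hypersurface is needed. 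In the interior, $S$ is the $C^1$ graph of $u$. So everything reduces to showing that this parametrization is $C^1$ up to $t=0$ with nondegenerate differential.

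The cleanest route is through the unit normal. On $S$ the upward unit normal is $N=(-\nabla u,1)/\sqrt{1+|\nabla u|^2}$. By \eqref{lem_verticality_uniform}, $N(x,u(x))\to(-\v(x_0),0)$ uniformly as $x\to x_0\in\partial\Om$, and this limit is continuous on $\partial\Om\times\{0\}$ because $\partial\Om$ is $C^1$. I would then argue locally. Near a boundary point, write $\partial\Om$ as a $C^1$ graph over a hyperplane in a tangent direction, and describe $\overline S$ as a graph over the $(n-1)+1$ variables (tangential coordinates of $\partial\Om$, height $t$) in the direction $\v(x_0)$. The slope of this graph is expressed through $N$; since $N$ has a nonvanishing $\v$-component near the boundary, the implicit function theorem applies in the interior, and at the boundary the slope has a continuous limit. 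A function that is $C^1$ on an open set, continuous up to the boundary, and whose gradient extends continuously is $C^1$ up to the boundary (mean value argument along segments). This gives $\overline S$ of class $C^1$ near $\partial\Om\times\{0\}$ with tangent plane $T_{x_0}\partial\Om\times\rr$. Strict monotonicity from \eqref{eq_grad_along_s} guarantees that the graph description is single-valued, even though $\Psi$ may fail to be injective.

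For \eqref{eq_convdellehausdorff}, I note that $\Sigma_\delta\times\{\delta\}=\overline S\cap\{x_{n+1}=\delta\}$, while $\partial\Om\times\{0\}=\overline S\cap\{x_{n+1}=0\}$. Near the bottom boundary, the height function $x_{n+1}$ restricted to $\overline S$ has tangential gradient of norm $\sqrt{1-N_{n+1}^2}$, which is close to $1$. Hence its level sets are $C^1$ hypersurfaces of $\overline S$ that vary continuously in $C^1$. Concretely, in the local coordinates above, $\Sigma_\delta$ is the graph over $\partial\Om$ of $x_0\mapsto h(x_0,\delta)$ (in the sense $x_0+h\v(x_0)$, or better in a local graph chart). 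The key facts are $h(\cdot,\delta)\to0$ uniformly and $\nabla_{x_0}h(\cdot,\delta)\to0$ uniformly as $\delta\to0$, both by the $C^1$ regularity just shown. Covering $\partial\Om$ by finitely many charts with a partition of unity, the area formula gives $\mathcal H^{n-1}(\Sigma_\delta)=\int_{\partial\Om}J_\delta\,d\mathcal H^{n-1}$ with $J_\delta\to1$ uniformly, and \eqref{eq_convdellehausdorff} follows.

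The main obstacle is the $C^1$ regularity at the bottom edge. Using the normal direction $\v(x_0)$ of a merely $C^1$ boundary is delicate, since $\v$ is only continuous, so $\Psi$ need not be differentiable. I would therefore avoid $\Psi$ in the regularity step and use fixed local graph coordinates instead. Take a single direction $e=\v(\bar x_0)$ and write $\overline S$ locally as $\{y+g(y,t)e\}$, with $y$ in the hyperplane $e^\perp$. Then $g$ is $C^1$ for $t>0$ by the implicit function theorem, since $\langle\nabla u,e\rangle>0$ nearby. Its gradient is expressed through $\nabla u/|\nabla u|$ and $1/|\nabla u|$, which converge to $(\v,0)$-type limits, so the gradient extends continuously to $t=0$.
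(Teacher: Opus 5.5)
Your final argument is essentially the paper's proof, which takes $e=-\partial_{x_1}$ and writes $\psi$ for $g$: a graph $g(y,t)$ in the fixed direction $e=\nu(\bar x_0)$ from the implicit function theorem (using $\langle\nabla u,e\rangle>0$), with $\partial_t g=1/\langle\nabla u,e\rangle\to0$ and tangential slopes converging to those of the boundary graph by verticality, a mean-value step at $t=0$, and then the area formula for $\{g(\cdot,\delta)\}$ with a partition of unity. Run the area-formula step entirely in these charts rather than through $x_0\mapsto x_0+h(x_0,\delta)\nu(x_0)$, since for a merely $C^1$ boundary that map need not be differentiable or injective; also record, as the paper does, that $g$ extends continuously to $t=0$ by the boundary graph function, which uses $u>0$ near $\partial\Om$.
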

\begin{proof}
The regularity of $\overline S$ may be proved locally near $\partial\Om$. Fix $\hat x \in\partial\Om$. As \Cref{defweaksol} is invariant under isometries, we assume that $\hat x=0$ and $\v(\hat x)=-\frac{\partial}{\partial x_1}$. We denote points in $\rr^n$ by $(x_1,x')$. Since $\partial\Om$ is of class $C^{1}$, there exist a neighborhood $U'$ of $0\in\rr^{n-1}$, a neighborhood $U_1$ of $0\in\rr$ and $\omega\in C^{1}(\overline{U'})$ such that 
    \begin{equation*}
        \Om\cap\left( U_1\times U'\right)=\{(x_1,x')\in\rr^n\,:\,x_1<\omega(x')\},\qquad  \partial\Om\cap\left( U_1\times U'\right)=\{(x_1,x')\in\rr^n\,:\,x_1=\omega(x')\}.
    \end{equation*}
    In particular, denoting by $\nabla'$ the gradient with respect to $x'$, 
      \begin{equation*}
        \nu\big((\omega(x'),x')\big)=-\frac{1}{\sqrt{1+|\nabla'\omega(x')|^2}}\frac{\partial}{\partial x_1}+\sum_{i=2}^n \frac{\partial_{i}\omega (x')}{\sqrt{1+|\nabla'\omega(x')|^2}}\frac{\partial}{\partial x_i}.
    \end{equation*}
    Therefore, by \eqref{lem_verticality_uniform} and for any $\tilde x\in \partial\Om\cap\left( U_1\times U'\right)$, 
    \begin{equation}\label{aux_limiteverticalitàderivatetang}
        \lim_{x\to \tilde x}\frac{\partial _{1}u}{|\nabla u|}=-\frac{1}{\sqrt{1+|\nabla'\omega(\tilde x')|^2}},\qquad  \lim_{x\to \tilde x}\frac{\partial _{i}u}{|\nabla u|}=\frac{\partial_{i}\omega (\tilde x')}{\sqrt{1+|\nabla'\omega(\tilde x')|^2}}\quad\text{ for }i=2,\ldots,n.
    \end{equation}
    Up to smaller neighborhoods, \eqref{eq_overdetermined_problem_boundary_verticality} implies  that 
    \begin{equation}\label{aux_proprietadiuxuno}
        \partial _{1}u<0\text{ on }(U_1\times U')\cap\Om,\qquad\lim_{x\to x_0}\partial _{1}u=-\infty\text{ for any }x_0\in(U_1\times U')\cap \partial\Om.
    \end{equation}
    In particular, $u>0\text{ on }(U_1\times U')\cap\Om$.  Therefore, up to smaller neighborhoods, the implicit function theorem yields the existence of $\gamma>0$ and $\psi\in C^1(U'\times(0,\gamma))$ such that 
    \begin{equation}\label{aux_dini_1}
        u\left(\psi(x',x_{n+1}),x'\right)=x_{n+1}\qquad\text{ on } U'\times(0,\gamma).
    \end{equation}
    Therefore, $S\cap \left(U_1\times U'\times(0,\gamma)\right)$ is the $x_{1}$-graph of $\psi$. Notice that $\psi$ can be extended with continuity up to $U'\times\{0\}$ by setting $\psi(x',0)=\omega(x')$. Indeed, assume that $((x_k)',(x_k)_{n+1})\to (x',0)$ as $k\to\infty$. Chose an arbitrary subsequence, without relabeling it. Notice that $\left\{\psi\big((x_k)',(x_k)_{n+1}\big)\right\}_{k}$ is bounded: up to a further subsequence, it converges to some $\tilde \psi\in\rr$. Passing to the limit in \eqref{aux_dini_1}, we infer that $u(\tilde \psi,x')=0$. Since $u>0$ locally by \eqref{eq_aux_superlineard}, the unique possibility is that $\tilde \psi=\omega(x')$. Therefore $\psi$ is continuous up to $U'\times\{0\}$.
   We show that $\psi$ is of class $C^1$ up to $U'\times\{0\}$. Differentiating \eqref{aux_dini_1} at $(x',x_{n+1})\in U'\times(0,\gamma)$ yields
    \begin{equation}\label{aux_derivateprimeuv}
        \partial_{i}\psi(x',x_{n+1})=-\frac{\partial_{i}u\left(\psi(x',x_{n+1}),x'\right)}{ \partial_{1}u\left(\psi(x',x_{n+1}),x'\right)}\quad i=2,\ldots, n,\quad  \partial_{n+1}\psi(x',x_{n+1})=\frac{1}{ \partial_{1}u\left(\psi(x',x_{n+1}),x'\right)}
    \end{equation}
   Therefore, for $i=2,\ldots,n$, for any fixed $\tilde x'\in U'$ and by the continuity of $\psi$,
    \begin{equation}\label{eq_convergenza_derivate_tang}
        \lim_{(x',x_{n+1})\to(\tilde x',0)}\partial_{i}\psi(x',x_{n+1})\overset{\eqref{aux_derivateprimeuv}}{=} \lim_{(x',x_{n+1})\to(\tilde x',0)}-\frac{\partial_{i}u\left(\psi(x',x_{n+1}),x'\right)}{ \partial_{1}u\left(\psi(x',x_{n+1}),x'\right)}\overset{\eqref{aux_limiteverticalitàderivatetang}}{=}\partial _{i}\omega(\tilde x')=\partial _{i}\psi(\tilde x',0).
    \end{equation}
    Therefore, the tangential (with respect to $U'\times\{0\}$)  derivatives of $\psi$ are continuous up to $U'\times\{0\}$. Moreover,
    \begin{equation}\label{aux_derinorm1}
        \lim_{(x',x_{n+1})\to(\tilde x',0)}\partial_{n+1}\psi(x',x_{n+1})\overset{\eqref{aux_derivateprimeuv}}{=}  \lim_{(x',x_{n+1})\to(\tilde x',0)}\frac{1}{ \partial_{1}u\left(\psi(x',x_{n+1}),x'\right)}\overset{\eqref{aux_proprietadiuxuno}}{=}0.
    \end{equation}
By Lagrange mean value theorem, there exists $\psi(\tilde x',x_{n+1})<\xi(\tilde x',x_{n+1})<\psi(\tilde x',0)=\omega(x')$ such that 
    \begin{equation}\label{aux_lagrange}
       x_{n+1}\overset{\eqref{aux_dini_1}}{=}u\big(\psi(\tilde x',x_{n+1}),\tilde x'\big)-u\big(\psi(\tilde x',0),\tilde x'\big)=\partial _{1} u\big(\xi(\tilde x',x_{n+1}),\tilde x'\big)\big(\psi(\tilde x',x_{n+1})-\psi(\tilde x',0)\big).
    \end{equation}
    Therefore, noticing that $\xi(\tilde x',x_{n+1})\to \omega(x')$ as $x_{n+1}\to 0^+,$
    \begin{equation}\label{aux_derinorm2}
        \partial^+_{n+1} \psi(\tilde x',0)=\lim_{x_{n+1}\to 0^+}\frac{\psi(\tilde x,x_{n+1})-\psi(\tilde x,0)}{x_{n+1}}\overset{\eqref{aux_lagrange}}{=}\lim_{x_{n+1}\to 0^+}\frac{1}{\partial _{1} u\big(\xi(\tilde x',x_{n+1}),\tilde x'\big)}\overset{\eqref{aux_proprietadiuxuno}}{=}0.
    \end{equation}
    By \eqref{aux_derinorm1} and \eqref{aux_derinorm2}, the normal derivative of $v$ is continuous up to $U'\times\{0\}$. To prove \eqref{eq_convdellehausdorff}, notice that
    \begin{equation*}
        \Sigma_\delta\cap \left(U_1\times U'\right)=\{(\psi(x',\delta),x')\,:\,x'\in U'\},\qquad \partial\Om\cap \left(U_1\times U'\right)=\{(\omega(x'),x')\,:\,x'\in U'\}.
    \end{equation*}
    Therefore, for an arbitrary $\varphi\in C^\infty_c(U_1\times U') $, the area formula yields
    \begin{equation}\label{eq_area_formula_hnmenouno}
    \begin{split}
           \lim_{\delta\to 0}\int_{\Sigma_\delta\cap \left(U_1\times U'\right)}\varphi\,d\mathcal H^{n-1}&=\lim_{\delta\to 0}\int_{U'}\varphi(\psi(x',\delta),x')\sqrt{1+|\nabla' \psi(x',\delta)|^2}\,dx'\\
           &\overset{\eqref{eq_convergenza_derivate_tang}}{=}\int_{U'}\varphi(\omega(x'),x')\sqrt{1+|\nabla' \omega(x')|^2}\,dx'\\
           &=\int_{\partial\Om\cap \left(U_1\times U'\right)}\varphi\,d\mathcal H^{n-1}.
    \end{split}    
    \end{equation}
    The choice of a finite partition of unity in a neighborhood of $\partial\Om$ and \eqref{eq_area_formula_hnmenouno} prove \eqref{eq_convdellehausdorff}.
 \end{proof}
 We conclude this section with the following (weak) comparison principle. \begin{proposition}\label{prop_comp_princ}
     Let $v,w\in C^1(\Om)\cap C^0(\overline\Om)$ satisfy \eqref{eq_weak_solution_integral_form}. If $v\leq w$ on $\partial\Om$, then $v\leq w$ on $\overline\Om$.
 \end{proposition}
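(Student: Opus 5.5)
The plan is the standard weak comparison argument, testing with a truncation of $(v-w)^+$. Since $v,w$ are only $C^1(\Om)\cap C^0(\overline\Om)$ and their gradients may blow up at $\partial\Om$, the test function must have compact support in $\Om$. Fix $\eps>0$ and set
\[
\varphi_\eps\coloneq (v-w-\eps)^+ .
\]
Since $v\le w$ on $\partial\Om$ and $v-w$ is continuous on $\overline\Om$, the set $\{v-w\geq\eps\}$ is a compact subset of $\Om$. Hence $\varphi_\eps$ is Lipschitz with compact support in $\Om$, and $v,w$ are $C^1$ on a neighborhood of its support. By mollification, \eqref{eq_weak_solution_integral_form} extends from $C^\infty_c(\Om)$ to such test functions. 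Indeed, on a compact neighborhood $K$ of the support, $A(|\nabla v|)\nabla v$ is bounded: $B$ is continuous on $(0,\infty)$, $B(t)\to0$ as $t\to0$, and $|\nabla v|$ is bounded on $K$.

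Subtracting the two weak formulations tested with $\varphi_\eps$ gives
\[
\int_{\{v-w>\eps\}}\big\langle A(|\nabla v|)\nabla v-A(|\nabla w|)\nabla w,\nabla v-\nabla w\big\rangle\,dx=0 .
\]
The key structural fact is strict monotonicity of the vector field $\xi\mapsto A(|\xi|)\xi=B(|\xi|)\,\xi/|\xi|$, with the value $0$ at $\xi=0$, which follows from \eqref{hp_0}. Expanding,
\[
\langle B(|\xi|)\hat\xi-B(|\eta|)\hat\eta,\xi-\eta\rangle
=B(|\xi|)|\xi|+B(|\eta|)|\eta|-\big(B(|\xi|)+B(|\eta|)\big)\langle\hat\xi,\eta\rangle'
\]
is awkward to use directly. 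Instead I would use the Cauchy–Schwarz estimate $\langle\hat\xi,\eta\rangle\le|\eta|$ and $\langle\hat\eta,\xi\rangle\le|\xi|$. This gives the lower bound $\big(B(|\xi|)-B(|\eta|)\big)\big(|\xi|-|\eta|\big)\geq0$, with equality in the full expression only if $\xi=\eta$. Equality forces $|\xi|=|\eta|$ (since $B$ is strictly increasing) and the Cauchy–Schwarz equalities, hence $\xi=\eta$. The cases where $\xi$ or $\eta$ vanishes are handled separately, using $B(0^+)=0$ and $B>0$ on $(0,\infty)$.

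Hence the integrand is nonnegative, so it vanishes a.e.\ on $\{v-w>\eps\}$, giving $\nabla(v-w)=0$ there. Therefore $\nabla\varphi_\eps=0$ a.e.\ in $\Om$. As $\varphi_\eps$ is Lipschitz with compact support in the connected set $\Om$, we get $\varphi_\eps\equiv0$, i.e.\ $v\le w+\eps$. Letting $\eps\to0$ yields $v\le w$ in $\Om$, and on $\overline\Om$ by continuity.

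The main obstacle is the admissibility of $\varphi_\eps$ in \eqref{eq_weak_solution_integral_form}, since the identity is only assumed for smooth compactly supported functions and the fluxes may be unbounded near $\partial\Om$. The $\eps$-shift solves this by keeping the support at positive distance from $\partial\Om$. The approximation step then needs only local boundedness of the fluxes, which follows from $v,w\in C^1(\Om)$ and the continuity of $B$ on $[0,\infty)$ guaranteed by \eqref{hp_0}.
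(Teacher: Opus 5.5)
Your proof is correct, but it takes a different route from the paper's.

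\textbf{The paper's route.} It exhausts $\Om$ by $C^1$ domains $\Om_\eps\Subset\Om$ and sets $M_\eps=\max_{\partial\Om_\eps}(v-w)$. It then applies the comparison lemma \cite[Lemma 3.7]{MR2232009}, which is stated for functions that are $C^1$ up to the boundary, to $v$ and $w+M_\eps$ on $\Om_\eps$. Finally, continuity on $\overline\Om$ gives $\limsup_{\eps\to0} M_\eps\le 0$.

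\textbf{Your route.} You work directly on $\Om$. The $\eps$-shift makes $\{v-w\geq\eps\}$ a compact subset of $\Om$, so after mollification $(v-w-\eps)^+$ is an admissible test function. Strict monotonicity of $\xi\mapsto A(|\xi|)\xi$, coming from \eqref{hp_0}, then forces the superlevel set to be empty.

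\textbf{Comparison.} In effect you reprove the cited lemma, and you replace the exhaustion by the shift in the test function. Both devices do the same job: they keep the comparison away from $\partial\Om$, where the fluxes may be unbounded. Your version is self-contained. It also shows exactly where continuity up to $\partial\Om$ and \eqref{hp_0} are used, including the convention $A(|\xi|)\xi=0$ at $\xi=0$ that gives local boundedness of the flux. The paper's version is shorter because it relies on the literature and on the existence of a suitable exhaustion.

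\textbf{A small slip.} Your first displayed expansion is wrong as written. The cross terms are $B(|\xi|)\langle\xi,\eta\rangle/|\xi|+B(|\eta|)\langle\xi,\eta\rangle/|\eta|$, not a common factor $\big(B(|\xi|)+B(|\eta|)\big)$ times a single inner product. You never use that display, though. The Cauchy--Schwarz lower bound $\big(B(|\xi|)-B(|\eta|)\big)\big(|\xi|-|\eta|\big)\geq0$ and your equality analysis are correct.
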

 \begin{proof}
     Choose any family $(\Om_\eps)_{\eps}$ of bounded domains, each of them compactly contained in $\Om$ and with $C^1$ boundary,  with the property that any converging sequence $(x_\eps)_\eps$ with $x_\eps\in\partial\Om_\eps$ converges to a point in $\partial\Om$. Set $M_\eps=\max_{\partial\Om_\eps}(v-w)$ and $w_\eps=w+M_\eps$. Then $w_\eps$ satisfies \eqref{eq_weak_solution_integral_form}. By construction, $v,w_\eps\in C^1(\overline \Om_\eps)$ and $v\leq w_\eps$ on $\partial\Om_\eps$. Then, by \cite[Lemma 3.7]{MR2232009}, $v\leq w+M_\eps$ on $\overline{\Om}_\eps.$ Since $v\leq w$ on $\partial\Om$ and both are continuous over $\overline\Om$, the choice of $(\Om_\eps)_\eps$ yields $\displaystyle{\lim_{\eps\to 0}}M_\eps\leq 0$. Therefore, $v\leq w$ on $\overline\Om$.
 \end{proof}
\subsection*{Proof of \Cref{thm_lambda}}
    By \eqref{hp_0}, $B\geq 0$, and it admits a limit $\lambda\in [0,\infty]$ as $t\to\infty$. Since $\partial\Om$ is $C^1$, there is an open neighborhood $U\subseteq\overline\Om\cap\{0\leq d\leq \bar\eps\}$ of $\partial\Om$, and $\varrho\in C^1(U)$, such that $\partial\Om=\{\varrho=0\}$, $\varrho>0$ on $U\cap\Om$ and $\nabla\varrho\neq 0$ on $U$. Up to a smaller neighborhood, there exist $\eps_1,\eps_2>0$ such that $\varrho<\eps_1$ on $U$, $|\nabla\varrho|>\eps_2$ on $U$ and $\varrho=\eps_1$ on $\partial U\cap\Om$ . In particular, $\frac{\nabla\varrho}{|\nabla\varrho|}=\nu$ on $\partial\Om$.  By \eqref{lem_verticality_uniform}, we assume that $|\nabla u|\neq 0$ on $U$.  Setting $\Gamma_\eps=\{\varrho=\eps\}$, one argues \emph{verbatim} as in the proof of \Cref{prop_convhaus} and as in \eqref{eq_aux_superlineard} to infer
    \begin{equation}\label{eq_convdellehausdorffgamma}
        \lim_{\eps\to 0}\mathcal H^{n-1}(\Gamma_\eps)=\mathcal H^{n-1}(\partial\Om),\qquad \eps_2 d(x)\leq \eps\text{ on }\Gamma_\eps.
    \end{equation}
    Fix $\eta\in C^\infty\big([0,\infty)\big)$ such that $\eta\equiv 0$ on $\left[0,\frac{1}{3}\right]$, $\eta\equiv 1$ on $\left[\frac{2}{3},\infty\right)$ and $\dot \eta\geq 0$.
    For $0<\eps<\eps_1,$ define
    \begin{equation*}
        \varphi_\eps(x)=
\begin{cases}
\displaystyle{\eta\left(\frac{\varrho(x)}{\eps}\right)} & \text{if } x\in U,\\
1  & \text{if } x\in\Om\setminus U.
\end{cases}
    \end{equation*}
  Then $\varphi_\eps\in C^1_c(\Om)$. Therefore, \eqref{eq_weak_solution_integral_form} and the definition of $\varphi_\eps$ imply that 
    \begin{equation}\label{aux_proof_limit_tat_1}
        \int_\Om \varphi_\eps\,dx
        =\frac{1}{\eps}\int_{\{0<\varrho<\eps\}}B(|\nabla u|)\left\langle\frac{\nabla u}{|\nabla u|},\frac{\nabla\varrho}{|\nabla\varrho|}\right\rangle\eta'\left(\frac{\varrho}{\eps}\right)|\nabla\varrho|\,dx
    \end{equation}
    On the one hand, the left hand side of \eqref{aux_proof_limit_tat_1} converges to $|\Om|$ as $\eps\to 0$. On the other hand, by the coarea formula and a change of variables, the right hand side of \eqref{aux_proof_limit_tat_1} reads as 
    \begin{equation}\label{aux_proof_limit_tat_2}
      \int_0^1 \eta'\left(\tau\right)\int_{\Gamma_{\tau\eps}}B(|\nabla u|)\left\langle\frac{\nabla u}{|\nabla u|},\frac{\nabla\varrho}{|\nabla\varrho|}\right\rangle \,d\mathcal H^{n-1}\,d\tau.
    \end{equation}
    We claim that $\lambda$ is finite. Assume not by contradiction. By \eqref{aux_proof_limit_tat_1},  \eqref{aux_proof_limit_tat_2} is bounded above by $|\Om|$. Therefore, by  \eqref{lem_verticality_uniform} and \eqref{eq_convdellehausdorffgamma}, and since $\frac{\nabla\varrho}{|\nabla\varrho|}=\nu$ on $\partial\Om$, for any $M>0$ there is $\eps_M>0$ such that 
    \begin{equation*}
        |\Om|\geq\int_0^1 \eta'\left(\tau\right)\int_{\Gamma_{\tau\eps}}B(|\nabla u|)\left\langle\frac{\nabla u}{|\nabla u|},\frac{\nabla\varrho}{|\nabla\varrho|}\right\rangle \,d\mathcal H^{n-1}\,d\tau\geq M\int_0^1\eta'\left(\tau\right)\,d\tau=M
    \end{equation*}
    for any $0<\eps<\eps_M$. Since $M>0$ is arbitrary, a contradiction follows. Hence $\lambda$ is finite. Therefore, by the dominated convergence theorem, \eqref{lem_verticality_uniform} and \eqref{eq_convdellehausdorffgamma} imply that  \eqref{aux_proof_limit_tat_2} converges to $\lambda\mathcal H^{n-1}(\partial\Om)$ as $\eps\to 0$. This concludes the proof of \eqref{consequence_1}. 
  To conclude, notice that \eqref{consequence_1} implies that $x\mapsto A(|\nabla u(x)|)\nabla u(x)$ is a \emph{bounded divergence-measure vector field} on $\Om$, e.g. in the sense of \cite{MR2118477}. Therefore, \eqref{consequence_1} and the weak divergence theorem proved in \cite[Theorem 2.9]{MR4819842} ensures that $\Om$ is self-Cheeger.
\subsection*{Proof of \Cref{thm_radial}}
Assume that \eqref{eq_overdetermined_problem} has a weak solution $u$ over a ball $B_R$. Then $u$ is radially symmetric. Otherwise, there would exist a rotation $T$ such that $u\circ T\neq u$ over $B_R$. Since \Cref{defweaksol} is invariant under isometries, $u\circ T$ solves \eqref{eq_overdetermined_problem} over $T^{-1}(B_R)=B_R$. But then, by \Cref{prop_comp_princ}, $u=u\circ T$, a contradiction.  Then $u(x)=g(|x|)$ for some profile $g$. By the proof of \cite[Proposition 2.2]{MR2232009}, $\dot g<0$ on $(0,R)$, and 
\begin{equation*}
    B(-\dot g(r))=\frac{r}{n}\qquad\text{on }(0,R).
\end{equation*}
Since $B(0,\infty)=(0,\lambda)$ by \eqref{hp_0} and \eqref{consequence_1}, then $R\leq n\lambda$. Moreover, $B$ is invertible by \eqref{hp_0}, whence
\begin{equation*}
   \dot g(r)=-B^{-1}\left(\frac{r}{n}\right)\qquad\text{on }(0,R).
\end{equation*}
By \eqref{eq_overdetermined_problem_boundary_verticality},  $|\nabla u(x)|=\left|B^{-1}(|x|/n)\right|$ blows up at $\partial B_R$, 
hence $R=n\lambda$. Therefore
\begin{equation}\label{eq_aux_tfcinradial}
    g(r)=g(0)-\int_0^r B^{-1}\left(\frac{\tau}{n}\right)\,d\tau\qquad\text{on }(0,n\lambda).
\end{equation}
By \eqref{eq_overdetermined_problem_dirichlet}, $\displaystyle{\lim_{r\to n\lambda}}g(r)=0$. Therefore, by \eqref{eq_aux_tfcinradial},
\begin{equation*}
    g(0)=\lim_{r\to n\lambda}\int_0^r B^{-1}\left(\frac{\tau}{n}\right)\,d\tau=\lim_{r\to n\lambda}n\int_0^{B^{-1}\left(\frac{r}{n}\right)}t\,\dot B(t)\,dt=n\int_0^{\infty}t\,\dot B(t)\,dt,
\end{equation*}
whence \eqref{hp_1} holds. Conversely, if \eqref{hp_1} holds, the above computation implies that $u$ as in \eqref{eq_radial} solves \eqref{eq_overdetermined_problem}.
\subsection*{Proof of \Cref{thm_main}} By \eqref{hp_1}, \eqref{eq_overdetermined_problem_dirichlet} and \eqref{eq_overdetermined_problem_boundary_verticality}, the $P$-function $P$ defined in \eqref{def_pfunction}
extend continuously to $\partial\Om$ by setting $P(x_0)= \Phi_\infty$ for any $x_0\in\partial\Om$. We claim that
\begin{equation}\label{eq_Pconstant}
    P\equiv \Phi_\infty\qquad\text{on }\overline\Om.
\end{equation}
To this aim, fix $\delta\in(0,\bar\delta)$, and let $E_\delta$ be as in \eqref{eq_edelta}. Since $u\in C^1(\Om)$ and $E_\delta\Subset\Om$, then $\nabla u\in C^0(\overline{E}_\delta,\rr^n)$. Therefore, although $P$ is not necessarily constant over $\partial E_\delta=\Sigma_\delta$, the maximum principle argument as in the proof of \cite[Lemma 3.2]{MR2232009} ensures that 
\begin{equation}\label{stimadasopradelta}
    P(x)\leq \max_{\Sigma_\delta}P= \max_{\Sigma_\delta}\Phi(|\nabla u|)+\frac{2\delta}{n}\qquad\text{for every $x\in E_\delta$}.
\end{equation}
Let $x_\delta\in\Sigma_\delta$ be such that the maximum in the right hand side of \eqref{stimadasopradelta} is assumed at $x_\delta$. By \eqref{eq_aux_superlineard}, $d(x_\delta)\leq u(x_\delta)=\delta$, so that $\displaystyle{\lim_{\delta\to 0}}\,d(x_\delta)=0.$ In particular, letting $\delta \to 0$ in \eqref{stimadasopradelta},
\begin{equation*}
    P(x)\leq\lim_{\delta\to 0}\Phi(|\nabla u(x_\delta)|)\overset{\eqref{lem_verticality_uniform}}{=}\Phi_\infty\qquad\text{for every $x\in \Om$}.
\end{equation*}
The above inequality implies \eqref{eq_Pconstant} provided that
\begin{equation}\label{eq_integral_identity}
    \int_\Om P(x)\,dx=\Phi_\infty |\Om|.
\end{equation}
Fix $0<\delta<\bar\delta$. Define $u_\delta\coloneq u-\delta$ on $\overline{E}_\delta$. Then $u_\delta\in C^1(\overline{E}_\delta)$ and $u_\delta\equiv 0$ on $\partial E_\delta$. Moreover, since $E_\delta\Subset\Om$, $u_\delta$ satisfies \eqref{eq_weak_solution_integral_form} on $E_\delta$. A standard approximation argument grants that $u_\delta$ satisfies \eqref{eq_weak_solution_integral_form} for every $\varphi\in C^1_0(\overline{E}_\delta)$. In particular, since $u_\delta\in C^1_0(\overline{E}_\delta)$ and $\nabla u_\delta=\nabla u$,
\begin{equation}\label{eq_test_udelta}
      \int_{E_\delta} B(|\nabla u|)|\nabla u|\,dx=\int_{E_\delta}u_\delta\,dx.
\end{equation}
On the other hand, notice that $ A(|\nabla u|)\nabla u=\nabla\mathcal L(\nabla u)$, where
\begin{equation*}
  \mathcal L(p)\coloneq \int_0^{|p|}B(\tau)\,d\tau
\end{equation*}
is strictly convex in view of \eqref{hp_0}. Therefore, applying \cite[Theorem 2]{MR2018671} with $a(x)\equiv0$ and $h(x)\equiv x$ yields
\begin{equation*}
    n\int_{E_\delta}\mathcal L(\nabla u)\,dx-\int_{E_\delta}B(|\nabla u|)|\nabla u|\,dx+\int_{E_\delta}\langle x,\nabla u\rangle\,dx=\int_{\Sigma_\delta}\big(\mathcal L(\nabla u)-B(|\nabla u|)|\nabla u|\big)\left\langle x,\nu^e_\delta\right\rangle\,d\mathcal H^{n-1},
\end{equation*}
where $\v^e_\delta$ is the outer unit normal to $E_\delta$. Noticing that 
\begin{equation*}
    \Phi(|\nabla u|)=2|\nabla u| B(|\nabla u|)-2\mathcal L (\nabla u),
\end{equation*}
and since $u_\delta=0$ on $\Sigma_\delta$, the divergence theorem yields
\begin{equation*}
    -\frac{n}{2}\int_{E_\delta}\Phi(|\nabla u|)\,dx+(n-1)\int_{E_\delta}B(|\nabla u|)|\nabla u|\,dx-n\int_{E_\delta} u_\delta\,dx=-\frac{1}{2}\int_{\Sigma_\delta}\Phi\left(|\nabla u|\right)\left\langle x,\nu^e_\delta\right\rangle\,d\mathcal H^{n-1}.
\end{equation*}
Recalling the definition of $P$, exploiting \eqref{eq_test_udelta} and multiplying by $-\frac{2}{n}$,
\begin{equation}\label{eq_aux_pog_pre}
  \int_{E_\delta}P(x)\,dx-\frac{2\delta}{n}|E_\delta|=\frac{1}{n}\int_{\Sigma_\delta}\Phi\left(|\nabla u|\right)\left\langle x,\nu^e_\delta\right\rangle\,d\mathcal H^{n-1}.
\end{equation}
Since $P\in C^0(\overline\Om)$, \eqref{E_delta_conv_lebesgue} and the dominated convergence theorem imply that the left hand side of \eqref{eq_aux_pog_pre} converges to the left hand side of \eqref{eq_integral_identity} as $\delta\to 0$. On the other hand, the divergence theorem grants that
\begin{equation}\label{eq_div_thm_boundary_delta}
    \frac{1}{n}\int_{\Sigma_\delta}\Phi\left(|\nabla u|\right)\left\langle x,\nu^e_\delta\right\rangle\,d\mathcal H^{n-1}=\frac{1}{n}\int_{\Sigma_\delta}\big(\Phi\left(|\nabla u|\right)-\Phi_\infty\big)\left\langle x,\nu^e_\delta\right\rangle\,d\mathcal H^{n-1}+\Phi_\infty|E_\delta|.
\end{equation}
Set $M=\max\{|x|\,:\,x\in\overline\Om\}$. Then
\begin{equation}\label{eq_stimaconhaus}
  \frac{1}{n}\left|\int_{\Sigma_\delta}\big(\Phi\left(|\nabla u|\right)-\Phi_\infty\big)\left\langle x,\nu^e_\delta\right\rangle\,d\mathcal H^{n-1}\right|\leq M\,\mathcal H^{n-1}(\Sigma_{\delta})\,\max_{\Sigma_\delta}|\Phi-\Phi_\infty|.
\end{equation}
Combining \eqref{lem_verticality_uniform} with \eqref{eq_convdellehausdorff}, \eqref{E_delta_conv_lebesgue} and with the boundary continuity of $\Phi$, \eqref{eq_stimaconhaus} and \eqref{eq_div_thm_boundary_delta} yield
\begin{equation*}
    \lim_{\delta\to 0} \frac{1}{n}\int_{\Sigma_\delta}\Phi\left(|\nabla u|\right)\left\langle x,\nu^e_\delta\right\rangle\,d\mathcal H^{n-1}=\Phi_\infty|\Om|,
\end{equation*}
whence \eqref{eq_integral_identity} follows. By the above considerations, \eqref{eq_Pconstant} holds. In order to conclude the proof, it suffices to observe that the second step of the proof of \cite[Theorem 1.1]{MR2366129} works regardless of the boundary regularity of $u$: \eqref{eq_Pconstant} implies that the level sets $\{u=\delta\}$, for $\delta\in(0,\max u)$, are concentric spheres. This fact grants that $\Om$ is a ball, and that $u$ is radial. The rest of the thesis then follows by \Cref{thm_radial}.
\bibliographystyle{abbrv}
\bibliography{biblio}
\end{document}